\theoremstyle{plain}
\newtheorem{theorem}{Theorem}
\newtheorem{corollary}[theorem]{Corollary}
\newtheorem{lemma}[theorem]{Lemma}
\newtheorem{proposition}[theorem]{Proposition}
\theoremstyle{definition}
\newtheorem{definition}[theorem]{Definition}
\newtheorem{example}[theorem]{Example}
\theoremstyle{remark}
\newtheorem{remark}[theorem]{Remark}
\newcommand{\wei}{w}
\newcommand{\len}{\ell}
\begin{document}

\begin{center}
\vskip 1cm{\Large \bf 
Enumeration of S-omino towers and\\\rule{0pt}{2.5ex}
row-convex k-omino towers
}
\vskip 1cm
\large
Alexander M. Haupt\\
Institute of Mathematics\\
Hamburg University of Technology\\
Am Schwarzenberg-Campus 3\\
21073 Hamburg\\
Germany\\
\href{mailto:alexander.haupt@tuhh.de}{\tt alexander.haupt@tuhh.de}
\end{center}

\vskip .2 in

\begin{abstract}
	We first enumerate a generalization of domino towers that was proposed by Tricia M.\ Brown (J.\ Integer Seq.\ 20 (2017)), which we call S-omino towers. We establish equations that the generating function must satisfy and then apply the Lagrange inversion formula to find a closed formula for the number of towers. We also show a connection to generalized Dyck paths and provide an explicit bijection. Finally, we consider the set of row-convex k-omino towers, introduced by Brown, and calculate an exact generating function.
\end{abstract}

\section{Introduction}\label{domino:introduction}

In this paper we enumerate a generalization of so called \emph{domino towers}. Domino towers are two-dimensional structures made out of $n$ dominoes, i.e., rectangular blocks of width $2$ and height $1$, in a brickwork pattern, such that:
\begin{enumerate}[noitemsep]
	\item The dominoes on the bottom level are contiguous, i.e., the row is convex;
	\item Every domino above the bottom row is (half) supported on at least one domino in the row below it;
	\item No domino lies directly on top of another domino, such as in a brickwork pattern.
\end{enumerate}

See Figure~\ref{fig:domino:smallTowers} for the domino towers with $n \in \{1,2,3\}$. The problem of counting domino towers was first mentioned by Viennot \cite{MR768962} and subsequently by many others, e.g., see Zeilberger \cite{1208.2258} and Mikl\'os B\'ona \cite[p.\ 25]{miklosBonaHandbook}. Surprisingly, the number of domino towers made up of exactly $n$ blocks is simply $3^{n-1}$. There are proofs using generating functions and bijective proofs.

\begin{figure}
	\centering
	\begin{tikzpicture}[scale=0.35]
	\begin{scope}[shift={(0,1)}]
	\draw[fill=cyan!50, draw=black] (0,0) rectangle ++(2,1);
	\end{scope}
	
	\begin{scope}[shift={(6,1)}]
	\draw[fill=cyan!50, draw=black] (0,0) rectangle ++(2,1);
	\draw[fill=cyan!50, draw=black] (2,0) rectangle ++(2,1);
	\end{scope}
	
	\begin{scope}[shift={(12,1)}]
	\draw[fill=cyan!50, draw=black] (0,0) rectangle ++(2,1);
	\draw[fill=cyan!50, draw=black] (1,1) rectangle ++(2,1);
	\end{scope}
	
	\begin{scope}[shift={(18,1)}]
	\draw[fill=cyan!50, draw=black] (1,0) rectangle ++(2,1);
	\draw[fill=cyan!50, draw=black] (0,1) rectangle ++(2,1);
	\end{scope}
	
	\begin{scope}[shift={(0,-3)}]
	\draw[fill=cyan!50, draw=black] (0,0) rectangle ++(2,1);
	\draw[fill=cyan!50, draw=black] (2,0) rectangle ++(2,1);
	\draw[fill=cyan!50, draw=black] (4,0) rectangle ++(2,1);
	\end{scope}
	
	\begin{scope}[shift={(9,-3)}]
	\draw[fill=cyan!50, draw=black] (0,0) rectangle ++(2,1);
	\draw[fill=cyan!50, draw=black] (2,0) rectangle ++(2,1);
	\draw[fill=cyan!50, draw=black] (3,1) rectangle ++(2,1);
	\end{scope}
	
	\begin{scope}[shift={(18,-3)}]
	\draw[fill=cyan!50, draw=black] (0,0) rectangle ++(2,1);
	\draw[fill=cyan!50, draw=black] (2,0) rectangle ++(2,1);
	\draw[fill=cyan!50, draw=black] (1,1) rectangle ++(2,1);
	\end{scope}
	
	\begin{scope}[shift={(0,-7)}]
	\draw[fill=cyan!50, draw=black] (0,0) rectangle ++(2,1);
	\draw[fill=cyan!50, draw=black] (2,0) rectangle ++(2,1);
	\draw[fill=cyan!50, draw=black] (-1,1) rectangle ++(2,1);
	\end{scope}
	
	\begin{scope}[shift={(9,-7)}]
	\draw[fill=cyan!50, draw=black] (0,0) rectangle ++(2,1);
	\draw[fill=cyan!50, draw=black] (1,1) rectangle ++(2,1);
	\draw[fill=cyan!50, draw=black] (-1,1) rectangle ++(2,1);
	\end{scope}
	
	\begin{scope}[shift={(18,-7)}]
	\draw[fill=cyan!50, draw=black] (0,0) rectangle ++(2,1);
	\draw[fill=cyan!50, draw=black] (1,1) rectangle ++(2,1);
	\draw[fill=cyan!50, draw=black] (2,2) rectangle ++(2,1);
	\end{scope}
	
	\begin{scope}[shift={(27,1)}]
	\draw[fill=cyan!50, draw=black] (0,0) rectangle ++(2,1);
	\draw[fill=cyan!50, draw=black] (1,1) rectangle ++(2,1);
	\draw[fill=cyan!50, draw=black] (0,2) rectangle ++(2,1);
	\end{scope}
	
	\begin{scope}[shift={(27,-3)}]
	\draw[fill=cyan!50, draw=black] (0,0) rectangle ++(2,1);
	\draw[fill=cyan!50, draw=black] (-1,1) rectangle ++(2,1);
	\draw[fill=cyan!50, draw=black] (0,2) rectangle ++(2,1);
	\end{scope}
	
	\begin{scope}[shift={(27,-7)}]
	\draw[fill=cyan!50, draw=black] (0,0) rectangle ++(2,1);
	\draw[fill=cyan!50, draw=black] (-1,1) rectangle ++(2,1);
	\draw[fill=cyan!50, draw=black] (-2,2) rectangle ++(2,1);
	\end{scope}
	
	\end{tikzpicture}
	\caption{Small restricted domino towers for $n \in \{1,2,3\}$.}
	\label{fig:domino:smallTowers}
\end{figure}

In this paper we drop the restriction that blocks cannot be placed directly on top of another and call the structures \textit{unrestricted} towers. In Corollary~\ref{domino:4^n} we show that the number of unrestricted domino towers is $4^{n-1}$. See Figures~\ref{fig:domino:dominoTowerRestricted} and~\ref{fig:domino:dominoTowerUnrestricted} for examples of restricted and unrestricted domino towers. 
\begin{figure}
	\centering
	\begin{minipage}{.42\textwidth}
		\centering
		\begin{tikzpicture}[scale=0.5]
		\draw[fill=cyan!50, draw=black] (0,0) rectangle ++(2,1);
		\draw[fill=cyan!50, draw=black] (2,0) rectangle ++(2,1);
		\draw[fill=cyan!50, draw=black] (-1,1) rectangle ++(2,1);
		\draw[fill=cyan!50, draw=black] (3,1) rectangle ++(2,1);
		\draw[fill=cyan!50, draw=black] (2,2) rectangle ++(2,1);
		\draw[fill=cyan!50, draw=black] (-2,2) rectangle ++(2,1);
		\draw[fill=cyan!50, draw=black] (1,3) rectangle ++(2,1);
		\draw[fill=cyan!50, draw=black] (3,3) rectangle ++(2,1);
		\end{tikzpicture}
		\caption{A restricted domino towers with $8$ blocks.}
		\label{fig:domino:dominoTowerRestricted}
	\end{minipage}\hspace*{1cm}
	\begin{minipage}{.42\textwidth}
		\centering
		\begin{tikzpicture}[scale=0.5]
		\draw[fill=cyan!50, draw=black] (0,0) rectangle ++(2,1);
		\draw[fill=cyan!50, draw=black] (2,0) rectangle ++(2,1);
		\draw[fill=cyan!50, draw=black] (0,1) rectangle ++(2,1);
		\draw[fill=cyan!50, draw=black] (3,1) rectangle ++(2,1);
		\draw[fill=cyan!50, draw=black] (2,2) rectangle ++(2,1);
		\draw[fill=cyan!50, draw=black] (-1,2) rectangle ++(2,1);
		\draw[fill=cyan!50, draw=black] (-1,3) rectangle ++(2,1);
		\draw[fill=cyan!50, draw=black] (2,3) rectangle ++(2,1);
		\draw[fill=cyan!50, draw=black] (4,2) rectangle ++(2,1);
		\draw[fill=cyan!50, draw=black] (1,4) rectangle ++(2,1);
		\end{tikzpicture}
		\caption{An unrestricted domino towers with $10$ blocks. }
		\label{fig:domino:dominoTowerUnrestricted}
	\end{minipage}
\end{figure}

In 2016, Brown generalized the problem to unrestricted towers made up of rectangles of width~$k$, which she called $k$-omino towers \cite{1608.01563}. She also introduced a variable~$b \ge 1$ for the number of blocks in the bottom row. The number of $k$-omino towers is $\binom{kn-1}{n-b}$. 

Brown also suggests that enumerating towers using rectangles of mixed widths could be interesting for other applications \cite[p.\ 17]{1608.01562}. In this paper we study this generalization by allowing rectangles with any width in a fixed finite list $S=(s_1,\ldots,s_m)$ of positive integers. We call this set of towers \emph{$S$-omino} towers. We additionally fix a list $(n_1,\ldots,n_m)$, where $n_i$ denotes the number of blocks of width $s_i$, and $b \ge 1$ the number of blocks in the bottom row. Furthermore, let $n:=n_1+\cdots+n_m \ge 1$ be the total number of blocks as before. We now state the first result of this paper. Note that for $S=(k)$ we, of course, recover the same formula as found by Brown. 
\begin{theorem}
	\label{domino:main}
	The number of $S$-omino towers with $n$ blocks of which $n_i$ have width~$s_i$, and $b$ blocks in the bottom row, which has to be convex, equals
	$$\binom{n}{n_1,\ldots,n_m}\binom{-1+\sum n_i s_i}{n-b}.$$
	Summing over all $b \in [n]$ we get
$$\binom{n}{n_1,\ldots,n_m} \binom{-1+\sum_{i=1}^m s_i n_i }{n-1}\cdot \, _2F_1\bigg(1,1-n;1+\sum_{i=1}^m (s_i-1) n_i ;-1\bigg),$$
where $_2F_1$ is the Gaussian hypergeometric function.
\end{theorem}

Note, that the heights of the blocks do not change the result, as we will explain in the next section. In particular, setting~$S=(1,k)$ for $k \ge 2$ corresponds to stacking~$k$-ominoes horizontally or vertically. In this paper we assume that $S$ does not contain duplicate entries for ease of notation. However, the methods would work and yield the same formula. Duplicate entries could be interpreted as having multiple distinguishable versions of dominoes with equal width. 

At the end of the paper we turn our attention to convex $k$-omino towers, which are defined as follows:
\begin{definition}
	\label{def:rowconvex}
	A tower is called \emph{column-convex} or \emph{row-convex} if all its columns or respectively rows are convex. Further, a tower is called \emph{convex} if it is both column- and row-convex.
\end{definition}
In 2016, Brown calculated the generating function for convex towers and asked whether row-convex towers can be enumerated as well \cite[p.\ 17]{1608.01562}.
\begin{definition}
	Let $g(n)$ be the number of row-convex $k$-omino towers made up of $n$ $k$-ominoes. We also define $f_\ell(n)$ to be the number of row-convex $k$-omino towers made up of $n$ $k$-ominoes resting on a platform of width $\ell k$. In other words the blocks on the bottom row need to rest on this platform, but the platform does not count towards the number of blocks.
\end{definition}
By adapting a method that Privman and \v{S}vraki\'{c} used in 1988 to calculate so-called fully directed compact lattice animals \cite{PhysRevLett.60.1107}, we calculate the ordinary generating functions~$G(z)=\sum_{n=0}^\infty g(n) z^n$ and~$F_\ell(z) = \sum_{n=0}^\infty f_\ell(n) z^n$.
\begin{theorem}
	\label{thm:rowconvex}
	We have:
	\begin{align*}
	G(z)=\sum_{\ell=1}^\infty z^\ell F_{\ell}(z),
	\end{align*}
	where
	\begin{align*}
	F_\ell(z) &= \bigg((1+kz)T_{1,\ell}+(kz^2-1)T_{2,\ell}+(k-1)z^3T_{3,\ell}\bigg)/\bigg((k-1)^2 z^5T_{2,3}\\
	&+(1-(2k-1)(1+z)z+k^2 z^3)T_{1,2}+(k-1) ((2k-1)z-1) z^3 T_{1,3}\bigg),\\
	T_{s,t}(z)&=A_s B_t - A_t B_s,\\
	A_\ell(z)&=\sum_{j=0}^\infty \frac{z^{\ell j} h_j }{\left(z;z\right)_j^2},\\
	B_\ell(z)&=\sum _{j=0}^{\infty} \frac{z^{\ell j} h_j }{\left(z;z\right)_j^2} \Bigg (\ell+\sum _{m=1}^j \left (1+\frac{2}{1-z^m}-\frac{1}{1+(k-1) z^m} \right) \Bigg ),\ \text{and}\\
	h_j(z)&:=z^{j(j+1)}\left((1-k)z;z\right)_j.
	\end{align*}	
\end{theorem}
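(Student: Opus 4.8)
The plan is to collapse the two-dimensional enumeration to a one-dimensional linear problem indexed by the base width $\ell$, solve the resulting $q$-difference equation, and pin down the free constants from the two lowest equations. First I would record the bottom-row decomposition $G(z)=\sum_{\ell\ge1}z^\ell F_\ell(z)$: a row-convex tower is its (convex) bottom row of some width $\ell\ge1$, contributing $z^\ell$, together with the row-convex tower it carries, counted by $F_\ell$. Next I would analyse how one convex row of $c$ blocks can rest on a convex row of $a$ blocks so that every block is supported. Since each row is an interval and the blocks above it are consecutive, support is equivalent to the \emph{two extreme} top blocks meeting the bottom interval, so counting admissible integer horizontal offsets gives a transition weight
\[
N(a,c)=(a-c+2)k-1\quad(1\le c\le a+1),\qquad N(a,c)=0\ (c\ge a+2),
\]
depending only on $a-c$. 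This produces the defining system
\[
F_\ell=1+\sum_{c=1}^{\ell+1}\bigl((\ell-c+2)k-1\bigr)z^{c}F_c\qquad(E_\ell).
\]

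I would then localise this \enquote{global} system into a recurrence in $\ell$ by introducing the partial sums $\sum_{c\le\ell+1}z^cF_c$ and differencing twice to eliminate them; the constant inhomogeneity cancels, leaving the homogeneous third-order $q$-difference equation
\[
(1-k)z^{\ell+1}F_{\ell+1}+(1-z^{\ell})F_{\ell}-2F_{\ell-1}+F_{\ell-2}=0.
\]
Seeking solutions analytic in $u=z^{\ell}$, the ansatz $F_\ell=\sum_{j\ge0}a_j z^{\ell j}$ reduces this to the first-order relation $a_j(1-z^{-j})^2=a_{j-1}\bigl(1+(k-1)z^{j}\bigr)$, whose normalised solution is exactly $A_\ell=\sum_j z^{\ell j}h_j/(z;z)_j^2$. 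The \emph{squared} factor $(1-z^{-j})^2$ (equivalently, the large-$\ell$ symbol $(r-1)^2$) signals a double indicial root, so the second admissible solution is the associated derivative-type solution carrying a linear-in-$\ell$ term together with the harmonic correction $\sum_{m}\bigl(1+\tfrac{2}{1-z^m}-\tfrac{1}{1+(k-1)z^m}\bigr)$, that is $B_\ell$. The remaining third solution of the order-three recurrence grows with negative powers of $z$ and so is not a formal power series; since $F_\ell$ is a genuine generating function, it must lie in the two-dimensional span, $F_\ell=\alpha A_\ell+\beta B_\ell$ for $z$-dependent constants $\alpha,\beta$.

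Finally I would fix $\alpha,\beta$ using the two equations $E_1,E_2$ that the differencing discarded, namely $(1-(2k-1)z)F_1-(k-1)z^2F_2=1$ and $-(3k-1)zF_1+(1-(2k-1)z^2)F_2-(k-1)z^3F_3=1$. Substituting $F_\ell=\alpha A_\ell+\beta B_\ell$ and solving by Cramer's rule yields a ratio in which every bilinear combination of the $A$'s and $B$'s collapses into a Casoratian $T_{s,t}=A_sB_t-A_tB_s$; collecting the coefficients reproduces exactly the stated numerator $(1+kz)T_{1,\ell}+(kz^2-1)T_{2,\ell}+(k-1)z^3T_{3,\ell}$ and the stated denominator, after which $G(z)=\sum_\ell z^\ell F_\ell$ is immediate. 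The hard part will be the middle step: constructing and rigorously verifying the second solution $B_\ell$ from the double root (the source of the harmonic sum), together with the admissibility argument that excludes the spurious third solution, so that $F_\ell$ genuinely lies in $\operatorname{span}(A_\ell,B_\ell)$ and the reduction from $\{E_\ell\}$ to recurrence-plus-$E_1,E_2$ is an equivalence.
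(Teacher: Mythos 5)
Your proposal is correct and follows essentially the same route as the paper: the same bottom-row decomposition and transition count $(\ell+2-i)k-1$, the same double differencing to the third-order $q$-difference equation (yours is the paper's recurrence \eqref{recF} shifted by two), the same ansatz yielding $A_\ell$ and its double-root companion $B_\ell$, and the same Cramer's-rule determination of $\alpha,\beta$ from $E_1,E_2$, producing the Casoratians $T_{s,t}$. The step you flag as hard can be bypassed: since the full system $\{E_\ell\}_{\ell\ge1}$ has a unique formal power series solution and is recoverable from the second-differenced recurrence together with $E_1,E_2$ by undoing the differencing, exhibiting the one solution $\alpha A_\ell+\beta B_\ell$ of the reduced system suffices, and no a priori exclusion of a third solution is required (the paper likewise just verifies $A_\ell$ and $B_\ell$ directly and asserts that the linear combination satisfying the boundary conditions is the answer).
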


The structure of the paper is as follows:
In Section~\ref{domino:sec:tools} we will introduce a few ideas and notation, which we need in later sections. In Section~\ref{domino:sec:lagrange} we prove Theorem~\ref{domino:main} using ordinary generating functions and the Lagrange inversion formula. In Section~\ref{domino:sec:bijection} we will turn the proof into an explicit bijection and show the connection to generalized Dyck paths \cite{Rukavicka2011}. Finally, in Section~\ref{domino:sec:rowconvex} we prove Theorem \ref{thm:rowconvex}.

\section{Representation of towers as sequences}\label{domino:sec:tools}
We can think of domino towers as being built by dropping single dominoes one by one. However, there may be multiple ways of building the same tower. Yet observe, that 
\begin{lemma}\label{domino:uniqueOrder}
	For any domino tower, there is a unique order $b_1,\ldots,b_n$ of its dominoes such that:
	\begin{enumerate}[i)]
		\item The tower can be built by dropping the dominoes in the order $b_1,\ldots,b_n$;
		\item For all $i\in [n-1]$, the left border of $b_{i+1}$ is strictly to the left of the right border of $b_{i}$.
	\end{enumerate}
\end{lemma}
\begin{proof}
	Consider the set of blocks, which have no other blocks above them, i.e., could be the last block according to condition i). Let $b'$ be the left-most block from this set and $b^\star$ be the block, that is dropped last and suppose, for a contradiction, that $b' \ne b^\star$. Then all blocks after $b'$ must be strictly to the left of $b'$ -- in particular $b^\star$ is to the left of $b'$. This contradicts the definition of $b'$, and therefore $b'$ is dropped last and we are done by induction on $n$.
\end{proof}

Hence, instead of enumerating towers, we can enumerate valid sequences of $x$-coordinates of the left borders of blocks $b_1,\ldots,b_n$. We fix $x_1 =0$ to keep this sequence unique. For example, $\langle 0,1,0,1,-2,-1,-3,-4\rangle$ is the sequence for the tower in Figure~\ref{fig:domino:dominoTowerRestricted}. We can define the set of valid sequences of $x$ coordinates as follows:
\begin{definition}\label{domino:def:Wb2}
	For $b\ge1$ we define $\mathcal{W}_b$ to be the set of sequences $\langle x_1,\ldots,x_n\rangle $ such that
	\begin{enumerate}[i)]
		\item $x_1 = 0$;
		\item For all $i\in [n-1]$ we have $x_{i+1} < x_{i}+2$;
		\item There exists a set $D \subset [n]$ containing $b$ numbers $1 = d_1 < d_2 < \dots < d_b$ such that \begin{enumerate}
			\item For all $j\in[b-1]$ we have $x_{d_{j+1}} + 2 = x_{d_j}$;
			\item For all $i \notin D$ with $i < d_{j+1}$ we have $x_i \ge x_{d_j}$;
			\item For all $i \notin D$ with $d_b < i$ we have $x_i + 2 > \min_{j<i} x_{j}$.
		\end{enumerate}
	\end{enumerate}
\end{definition}

Note, that by thinking about sequences of $x$-coordinates instead of towers, it is now clear that the heights of blocks do not change the number of towers. Only the~$x$-coordinates and order matter. Also note, that a tower is restricted if and only if the corresponding sequence does not have repeated consecutive entries. Using this insight we can now explain the relation between restricted and unrestricted towers:
\begin{corollary}\label{domino:4^n}
	The number of unrestricted domino towers made out of $n$ dominoes is equal to $4^{n-1}$.
\end{corollary}
\begin{proof}
	Consider the sequences in $\mathcal{W}:= \bigcup_b \mathcal{W}_b$ that have no repeated consecutive entries. We know from the introduction that there are $3^{n-1}$ such sequences of length~$n$. The corresponding ordinary generating function is therefore $f(x) = x+3x^2+9x^3+\cdots = \frac{x}{1-3x}$. Now, if we take such a sequence and replace every entry $x_i$ by a sequence $x_i,\ldots,x_i$ of arbitrary positive length, we get a sequence in $\mathcal{W}$, where repeated consecutive entries are allowed, i.e., a sequence corresponding to an unrestricted tower. This process is reversible: To recover the original sequence, we simply delete repeated consecutive entries. See Figure~\ref{fig:domino:substitution} for an illustration. This is an example of a \emph{substitution}\index{ordinary generating function!substitution} as defined by Flajolet \cite[Definition I.14]{flajolet}. In terms of the generating functions this procedure therefore corresponds to replacing $x$ with $x+x^2+x^3+\cdots=\frac{x}{1-x}$. We can now deduce the generating function for the unrestricted domino towers: 
	$$\frac{x}{1-3x} \leadsto \frac{\frac{x}{1-x}}{1-3\cdot \frac{x}{1-x}}=\frac{x}{1-4x},$$
	from which we can read off the number: $4^{n-1}$.	
\end{proof}
\begin{figure}
	\centering
	\begin{tikzpicture}[scale=0.5]
	\draw[fill=violet!50, draw=black] (0,0) rectangle ++(2,1);
	\draw[fill=cyan!50, draw=black] (2,0) rectangle ++(2,1);
	\draw[fill=cyan!50, draw=black] (-1,1) rectangle ++(2,1);
	\draw[fill=cyan!50, draw=black] (3,1) rectangle ++(2,1);
	\draw[fill=cyan!50, draw=black] (2,2) rectangle ++(2,1);
	\draw[fill=cyan!50, draw=black] (1,3) rectangle ++(2,1);
	\draw[fill=red!50, draw=black] (3,3) rectangle ++(2,1);
	
	\draw[thick,<->] (5.5,2.5) -- (7,2.5);
	
	\draw[fill=violet!50, draw=black] (8,0) rectangle ++(2,1);
	\draw[fill=violet!50, draw=black] (8,1) rectangle ++(2,1);
	\draw[fill=violet!50, draw=black] (8,2) rectangle ++(2,1);
	\draw[fill=violet!50, draw=black] (8,3) rectangle ++(2,1);
	\draw[fill=cyan!50, draw=black] (10,0) rectangle ++(2,1);
	\draw[fill=cyan!50, draw=black] (7,4) rectangle ++(2,1);
	\draw[fill=cyan!50, draw=black] (11,1) rectangle ++(2,1);
	\draw[fill=cyan!50, draw=black] (10,2) rectangle ++(2,1);
	\draw[fill=cyan!50, draw=black] (9,4) rectangle ++(2,1);
	\draw[fill=red!50, draw=black] (11,3) rectangle ++(2,1);
	\draw[fill=red!50, draw=black] (11,4) rectangle ++(2,1);
	\draw[fill=red!50, draw=black] (11,5) rectangle ++(2,1);
	\end{tikzpicture}
	\caption{Illustration of the substitution of $x$ with $\frac{x}{1-x}$. The corresponding sequences are
		$\langle 0,1,0,\textcolor{red}{1},\textcolor{violet}{-2},-1,-3\rangle$ and
		$\langle 0,1,0,\textcolor{red}{1,1,1},\textcolor{violet}{-2,-2,-2,-2},-1,-3\rangle$ respectively.}
	\label{fig:domino:substitution}
\end{figure}
We can generalize sequences $\mathcal{W}_b$ for $S$-omino towers by also keeping track of the widths $\len_i$. For that we redefine $\mathcal{W}_b$ as follows:

\begin{definition}\label{domino:def:Wb}
	We define $\mathcal{W}_b$ to be the set of sequences of pairs $\langle (x_1,\len_1),\ldots,(x_n,\len_n)\rangle $ with $n \ge b$ such that
	\begin{enumerate}[i)]
		\item $x_1 = 0$;
		\item For all $i\in [n-1]$ we have $x_{i+1} < x_{i}+\len_i$;
		\item There exists a set $D \subset [n]$ containing $b$ numbers $1 = d_1 < d_2 < \dots < d_b$ such that: \begin{enumerate}
			\item For $j\in[b-1]$ we have $x_{d_{j+1}} + \len_{d_{j+1}} = x_{d_j}$;
			\item For all $i \notin D$ with $i < d_{j+1}$ we have $x_i \ge x_{d_j}$;
			\item For all $i \notin D$ with $d_b < i$ we have $x_i + \len_i > \min_{j<i} x_{j}$.
		\end{enumerate}
	\end{enumerate}
\end{definition}

As we want to keep track of how many blocks of each length we have used, we define the weight of a sequence $t = \langle (x_1,\len_1),\ldots,(x_n,\len_n)\rangle \in \mathcal{W}$ as $\wei(t) := z^n y_{\len_1} y_{\len_2} \cdots y_{\len_n}$. Hence the exponent of $y_\len$ is the number of pairs in $t$ with second entry equal to $\len$ and the exponent of $z$ is the total number of pairs. We define the multivariate ordinary generating function of $\mathcal{W}_b$ with formal variables $z,y_1,y_2,y_3,\ldots$ as $W = \sum_{t \in \mathcal{W}} \wei(t)$. Lemma~\ref{domino:uniqueOrder} now immediately generalizes to:
\begin{lemma}\label{domino:uniqueOrderGeneralised}
	Fix $s_i,n_i$ and $b$ as before. Then there is a bijection between such $S$-omino towers and elements in $\mathcal{W}_b$ of weight $z^n \prod_i y_{s_i}^{n_i}$. Furthermore a tower is restricted if and only if there are no repeated consecutive elements in the corresponding sequence in $\mathcal{W}_b$. 
\end{lemma}
\begin{proof}
	Similarly to Lemma~\ref{domino:uniqueOrder}, the last element of the sequence must correspond to the left-most block of the tower, among the blocks that do not have any other blocks vertically above it. The statement follows from induction on $n$.
\end{proof}

We will often need to offset sequences of pairs of the form $(x,\len)$ horizontally, so we define
$$\langle (x_1,\len_1),\ldots,(x_n,\len_n) \rangle + \alpha :=\langle (x_1+\alpha,\len_1),\ldots,(x_n+\alpha,\len_n) \rangle.$$

Also, we define \textit{concatenation} of two sequences as follows:
$$\langle a_1,\ldots,a_n \rangle \mathbin \Vert \langle b_1,\ldots,b_m \rangle :=\langle a_1,\ldots,a_n, b_1,\ldots,b_m \rangle.$$

\section{Proof using the Lagrange inversion formula} \label{domino:sec:lagrange}

The main tool we use in this section is the following version of the Lagrange inversion formula \cite[Section 2.6]{miklosBonaHandbook}. Here $[x^n] G(x)$ denotes the coefficient of $x^n$ in the formal power series~$G(x)$.

\begin{proposition}[The Lagrange inversion formula \cite{miklosBonaHandbook}]
	\label{domino:lagrange}
	Let $Y(x)=x \Phi(Y)$, where $\Phi(Y)$ is a power series such that $\Phi(0)\ne 0$. Then for any power series $g(Y)$ and $n \ge 1$ we have
	$$[x^n] g(Y) = \frac{1}{n} [y^{n-1}] g'(y) (\Phi(y))^n.$$
\end{proposition}

We prove an immediate Corollary:
\begin{corollary}\label{domino:cor:lagrange}
	Let $Y(x)=x \Phi(Y)$, where $\Phi(Y)$ is a power series such that $\Phi(0)\ne 0$. Then for any power series $h(Y)$ and $n \ge 1$ we have:
	$$[x^n] x \frac{dY}{dx} h(Y) = [y^{n-1}] h(y) (\Phi(y))^n.$$
\end{corollary}
\begin{proof}
	Let $g(Y)=\int h(Y) dY$ and apply Proposition~\ref{domino:lagrange} on $g$.
	\begin{align*}
	&[x^n] x \frac{dY}{dx} h(Y)\\
	&=n\cdot [x^n] \int \frac{dY}{dx} h(Y) dx\\
	&=n\cdot [x^n] \int h(Y) dY \\
	&=[y^{n-1}] h(y)  (\Phi(y))^n.
	\end{align*} 
\end{proof}

The idea of the proof is to relate $\mathcal{W}_b$ to other sets of sequences.
\begin{definition}\label{domino:def:U}
	We define $\mathcal{U}_{\star}$ to be the set of sequences of pairs $\langle(x_1,\len_1),\ldots,(x_n,\len_n)\rangle$ with $n \ge 1$ such that:
	\begin{enumerate}[i)]
		\item $x_1=0$;
		\item For all $i>1$ we have $x_i \ge 1$ and $x_{i} < x_{i-1}+\len_{i-1}$
	\end{enumerate}
	We also define $\mathcal{U}_{1} := \mathrm{Seq}_{\ge 1}(\mathcal{U}_{\star})$, the set of sequences that are a concatenation of at least one sequence in $\mathcal{U}_{\star}$. For convenience we similarly define $\mathcal{U} := \mathrm{Seq}_{\ge 0}(\mathcal{U}_{\star}) = \{\langle \rangle\} \cup \mathcal{U}_1$, which also contains the empty sequence.
\end{definition}
\begin{lemma}\label{domino:lemma:splitU}
	Let $\langle(x_1,\len_1),\ldots,(x_n,\len_n)\rangle \in \mathcal{U}_\star$. Then there is exactly one choice of indices $d_1,\ldots,d_{\len_1}$ such that:
	\begin{itemize}
		\item $2=d_1 \le d_2 \le \ldots \le d_{\len_1} = n+1$;
		\item For all $j \in [\len_1-1]$, the subsequence
		$\langle(x_{d_j},\len_{d_j}),\ldots,(x_{d_{j+1}-1},\len_{d_{j+1}-1})\rangle -\alpha_j \in \mathcal{U}$, where $\alpha_j := \len_1-j$. In particular, if $d_j < d_{j+1}$ then $x_{d_j} = \alpha_j$.
	\end{itemize} 
\end{lemma}
\begin{proof}
	Define the indices $d_j^\star := \min \{i \ge 2 :x_i \le \alpha_j\}$ where $d_j^\star = n+1$ if such an $i$ does not exist. We now prove by induction on $j$ that $d_{j+1} = d_{j+1}^\star$ is the unique choice for indices that satisfy the conditions in the lemma.
	Clearly for $j=0$ we are done, because~$d_1=2=d_1^\star$. Similarly for $j=\len_1-1$ we have $d_{\len_1}=n+1=d_{\len_1}^\star$. Now for $j \in [\len_1-2]$ we can assume the induction hypothesis for $j-1$, i.e., that $d_{j}=d_{j}^\star$. For a contradiction, we consider the cases~$d_{j+1}>d_{j+1}^\star$ and $d_{j+1}<d_{j+1}^\star$ separately:
	
	\textbf{Case 1:}
	If $d_{j+1}>d_{j+1}^\star \ge d_j^\star = d_j$ then by definition of $d_{j+1}^\star$ we have $x_{d_{j+1}^\star} \le \alpha_{j+1}$. However by definition of $\mathcal{U}_\star$ and the fact that $\langle(x_{d_j},\len_{d_j}),\ldots,(x_{d_{j+1}-1},\len_{d_{j+1}-1})\rangle -\alpha_j \in \mathcal{U}$ we have $x_{d_{j+1}^\star} \ge \alpha_j = \alpha_{j+1} + 1$, a contradiction. 
	
	\textbf{Case 2:}
	If $d_{j+1}<d_{j+1}^\star$ then by definition of $d_{j+1}^\star$ we have $x_{d_{j+1}} > \alpha_{j+1}$. However for $k := \max \{p:d_p = d_{j+1}\}$ we have $d_k < d_{k+1}$ and therefore $x_{d_{j+1}} = x_{d_k} = \alpha_k \le \alpha_{j+1}$, a contradiction.
	
	Hence $d_{j+1}=d_{j+1}^\star$ and we are done by induction on $j$.
\end{proof}

\begin{example}\label{domino:example:splitU}
The sequence $\langle (0,4),(3,2),(3,2),(1,2),(2,2) \rangle \in \mathcal{U}_\star$ corresponds to the tower in Figure \ref{fig:domino:splitU}. The indices $d_i$ in this example are: $d_1=2$, $d_2=4$, $d_3=4$ and $d_4=6$. We have $\langle (3,2),(3,2) \rangle - 3 \in \mathcal{U}$ drawn in blue, $\langle \rangle - 2 \in \mathcal{U}$, and finally $\langle (1,2),(2,2) \rangle - 1 \in \mathcal{U}$ drawn in red.
\begin{figure}[ht]
	\centering
	\begin{tikzpicture}[scale=0.5]
	\draw[fill=cyan!50, draw=black] (0,0) rectangle ++(4,1);
	\draw[fill=violet!50, draw=black] (3,1) rectangle ++(2,1);
	\draw[fill=violet!50, draw=black] (3,2) rectangle ++(2,1);
	\draw[fill=red!50, draw=black] (1,1) rectangle ++(2,1);
	\draw[fill=red!50, draw=black] (2,3) rectangle ++(2,1);
	\end{tikzpicture}
	\caption{Illustration of Example \ref{domino:example:splitU}.}
	\label{fig:domino:splitU}
\end{figure}
\end{example}

We have just shown that every sequence in $\mathcal{U}_\star$ can be built by concatenating $\langle (0,\len) \rangle$ with $\len-1$ sequences in $\mathcal{U}$, offset by $\len-1,\len-2,\ldots,1$ respectively and that this construction is unique. Similarly every sequence in $\mathcal{U}_1$ can be constructed uniquely using $\len$ sequences in $\mathcal{U}$. This motivates the following equivalent definition of $\mathcal{U}$. 
\begin{definition}\label{domino:def:U0}
	We define $\mathcal{U}$ to be the minimal set such that:
	\begin{enumerate}[i)]
		\item $\langle \rangle \in \mathcal{U}$;
		\item $\mathcal{U}$ is closed under the following procedure:\begin{enumerate}[a)]
			\item Pick any $\len \in \mathbb{N}$;
			\item Pick any $\len$ elements $c_0,\ldots,c_{\len-1}$ in $\mathcal{U}$;
			\item Then $\langle(0,\len)\rangle \mathbin\Vert c_{\len-1}+\len-1\mathbin\Vert c_{\len-2}+\len-2 \mathbin\Vert \cdots \mathbin\Vert c_0+0 \in \mathcal{U}$.
		\end{enumerate}
	\end{enumerate}
\end{definition}
We also define two other sets $\mathcal{X}$ and $\mathcal{V}$ in a similar manner.
\begin{definition}\label{domino:def:XV}
	We define $\mathcal{X}_{\len}$ to be the set of sequences
	$$\mathcal{X}_{\len} := \{u-\alpha: 0 \le \alpha < \len \wedge u\in \mathcal{U}_\star \wedge u \text{ starts with }(0,\len)\}.$$
	Let $\mathcal{X} = \bigcup_{\len \in \mathbb{N}} \mathcal{X}_{\len}$ be the union over all possible lengths $\len$. Further, we define $\mathcal{V}$ to be the minimal set such that:
	\begin{enumerate}[i)]
		\item $\langle \rangle \in \mathcal{V}$;
		\item $\mathcal{V}$ is closed under the following procedure:\begin{enumerate}[a)]
			\item Pick any $\len \in \mathbb{N}$;
			\item Pick any elements  $v \in \mathcal{V}$ and $x \in \mathcal{X}_{\len}$ starting with, say, $(-\alpha,\len)$;
			\item Then $x \mathbin\Vert (v-\alpha) \in \mathcal{V}$.
		\end{enumerate}
	\end{enumerate}
\end{definition}

\begin{lemma}\label{domino:lemma:splitW}
	$\mathcal{W}_b$ is related to $\mathcal{X}$ and $\mathcal{V}$ as follows:
	\begin{enumerate}[i)]
		\item For all $b\ge 2$ we have a weight-preserving bijection $\mathcal{W}_b \leftrightarrow \mathcal{U}_1 \times  \mathcal{W}_{b-1}$;
		\item We have a weight-preserving bijection $\mathcal{W}_1 \leftrightarrow \mathcal{U}_\star \times V$.
	\end{enumerate}
\end{lemma}
\begin{proof}
	Consider any element $\langle (x_1,\len_1),\ldots,(x_n,\len_n)\rangle \in \mathcal{W}_b$. For $b\ge 2$ we know from Definition~\ref{domino:def:Wb} that there exists an index $1 < d_2 = \min \{i \ge 2 : x_i < 0 \}$ such that we have $$\langle (x_1,\len_1),\ldots,(x_{d_2-1},\len_{d_2-1})\rangle \in \mathcal{U}_1\text{ and }\langle (x_{d_2},\len_{d_2}),\ldots,(x_{n},\len_{n})\rangle + \len_{d_2} \in \mathcal{W}_{b-1}.$$

	For $b=1$ we let $d = \min \{i \ge 2 : x_i \le 0 \}$ and $d = n+1$ if such an $i$ does not exist. Then $$\langle (x_1,\len_1),\ldots,(x_{d-1},\len_{d-1}) \rangle \in \mathcal{U}_\star\text{ and }\langle (x_d,\len_d),\ldots,(x_{n},\len_{n}) \rangle \in \mathcal{V}.$$
	In both cases the function has an inverse: Concatenate both parts back together.
\end{proof}

\begin{example}\label{domino:example:splitW}
The sequence $\langle (0,2),(1,2),(2,2),(-2,2),(-1,2),(-3,2),(-4,2)\rangle \in \mathcal{W}_2$ corresponds to the tower in Figure \ref{fig:domino:splitW}. We have $\langle (0,2),(1,2),(2,2)\rangle \in \mathcal{U}_1$ drawn in blue, $\langle (-2,2),(-1,2)  \rangle + 2 \in \mathcal{U}_\star$ drawn in violet, and finally $\langle (-3,2),(-4,2) \rangle + 2 \in \mathcal{V}$ drawn in red.
\begin{figure}[ht]
	\centering
	\begin{tikzpicture}[scale=0.5]
	\draw[fill=cyan!50, draw=black] (0,0) rectangle ++(2,1);
	\draw[fill=cyan!50, draw=black] (1,1) rectangle ++(2,1);
	\draw[fill=cyan!50, draw=black] (0,2) rectangle ++(2,1);
	\draw[fill=violet!50, draw=black] (-2,0) rectangle ++(2,1);
	\draw[fill=violet!50, draw=black] (-1,3) rectangle ++(2,1);
	\draw[fill=red!50, draw=black] (-3,1) rectangle ++(2,1);
	\draw[fill=red!50, draw=black] (-4,2) rectangle ++(2,1);
	\end{tikzpicture}
	\caption{Illustration of Example \ref{domino:example:splitW}.}
	\label{fig:domino:splitW}
\end{figure}
\end{example}

Using the same weights as for $W$, let $U,U_1,U_\star,X_\len,X,V$ be the multivariate ordinary generating functions of $\mathcal{U}, \mathcal{U}_1, \mathcal{U}_\star, \mathcal{X}_\len, \mathcal{X}, \mathcal{V}$ respectively.

\begin{theorem}\label{domino:thm:long}
	The ordinary generating functions satisfy the following equations:
	\begin{enumerate}[a)]
		\item \label{domino:longtheorem:part_1} $U = 1 + U_1$ and $U_1 = U_\star \cdot U = \sum_{\len\in \mathbb{N}} z y_\len U^\len$
		\item \label{domino:longtheorem:part_2} $X_{\len} =\len  z y_\len   U^{\len-1}$, $X = \sum_{\len\in \mathbb{N}} X_{\ell}$, and $V = 1 + X \cdot V$
		\item \label{domino:longtheorem:part_3} $W_b = U_\star \cdot V \cdot U_1^{b-1}$
		\item \label{domino:longtheorem:part_4} $z \frac{dU_1}{dz} = U_1 + X \cdot z \frac{dU_1}{dz}$
		\item \label{domino:longtheorem:part_5} $z \frac{dU_1}{dz} = U_1 \cdot V$
		\item \label{domino:longtheorem:part_6} $(1+U_1) \cdot W_b = U_1^{b-1} z \frac{dU_1}{dz} $
	\end{enumerate}
\end{theorem}
\begin{proof}
	Parts~\ref{domino:longtheorem:part_1}, \ref{domino:longtheorem:part_2} and \ref{domino:longtheorem:part_3} follow from Lemma~\ref{domino:lemma:splitU}, Definition~\ref{domino:def:XV} and Lemma~\ref{domino:lemma:splitW} respectively. For~\ref{domino:longtheorem:part_4} note, that $z \frac{dU_1}{dz}$ is the ordinary generating function of $\Theta \mathcal{U}_1$, i.e., the set $\mathcal{U}_1$, where one element is marked. We can define $\Theta \mathcal{U}_1 := \{(u,k) : u \in \mathcal{U}_1, k\in [|u|]\}$.
	We now describe a bijection $f$ between $\Theta \mathcal{U}_1$ and $\mathcal{U}_1 + X \times \Theta \mathcal{U}_1$. Let $(u,k) \in \Theta \mathcal{U}_1$. Now note, that there are two cases:
	\begin{enumerate}
		\item For $k=1$ we simply define $f((u,k)) := u \in \mathcal{U}_1$;
		\item For $k\ge 2$ we know that by Definition~\ref{domino:def:U0} there exists $\len \in \mathbb{N}$ and sequences $c_0,\ldots,c_{\len-1} \in \mathcal{U}$ and $2 = d_1\le \cdots\le d_{\len+1} = n+1$ such that $$u = \langle(0,\len)\rangle \mathbin\Vert (c_{\len-1}+\len-1)\mathbin\Vert (c_{\len-2}+\len-2) \mathbin\Vert \cdots \mathbin\Vert (c_0+0),$$
		where $|c_{\len-i}|=d_{i+1}-d_i$. As $k\ge 2$ there exists exactly one $p \in [\len]$ such that $d_p \le k < d_{p+1}$. Hence $(c_{\len-p},k-d_p+1) \in \Theta \mathcal{U}_1$. Let 
		\begin{align*}
		x = \langle (-\len+p,\len)\rangle &\mathbin\Vert (c_{\len-1}+p-1)\mathbin\Vert \cdots \mathbin\Vert (c_{\len-p+1}+1) \\
		&\mathbin\Vert  (c_{\len-p-1}) \mathbin\Vert \cdots \mathbin\Vert (c_0-\len+p+1).
		\end{align*}
		Then $x \in \mathcal{X}_{\len}$. We define $f((u,k)) = (x,(c_{\len-p},k-d_p+1)) \in \mathcal{X} \times \Theta \mathcal{U}_1$. This function is invertible, because $x$ stores the variable $p$, which enables us to undo the shifts and reinsert $c_{\len-p}$ into $u$ at the correct position.
	\end{enumerate}
	
	For \ref{domino:longtheorem:part_5} note, that from \ref{domino:longtheorem:part_2} and \ref{domino:longtheorem:part_4} follows $ \Theta \mathcal{U}_1 = \mathcal{U}_1 \times\mathrm{Seq}_{\ge 0}(\mathcal{X}) = \mathcal{U}_1 \times \mathcal{V}$. Finally \ref{domino:longtheorem:part_6} follows from \ref{domino:longtheorem:part_1},  \ref{domino:longtheorem:part_3} and \ref{domino:longtheorem:part_5}.
\end{proof}

\begin{proof}[Proof of Theorem~\ref{domino:main}]
	
	We consider the multivariate power series $W_b$ in $z,y_1,y_2,y_3,\ldots$ as a power series in $z$ with coefficients in the ring of multivariate formal power series in $y_1,y_2,y_3,\ldots$ and use Proposition~\ref{domino:cor:lagrange}:
	\begin{align*}
	[z^n]W_b &= [z^n] z \frac{dU_1}{dz} \frac{U_1^{b-1}}{1+U_1} \\ \displaybreak[0]
	&=  [u^{n-1}] \frac{u^{b-1}}{1+u} \bigg (\sum_{i \in [m]} y_{s_i} (1+u)^{s_i} \bigg)^n\\\displaybreak[0]
	&=  [u^{n-b}] \frac{1}{1+u} \bigg (\sum_{i \in [m]} y_{s_i} (1+u)^{s_i} \bigg)^n.
	\end{align*}
	Now we also fix the number of occurrences of length $s_i$ to be $n_i$:
	\begin{align*}
	[y_{s_1}^{n_1} \cdots y_{s_m}^{n_m}] [z^n]W_b &= [y_{s_1}^{n_1} \cdots y_{s_m}^{n_m}] [u^{n-b}] \frac{1}{1+u} \bigg (\sum_{i \in [m]} y_{s_i} (1+u)^{s_i} \bigg)^n\\\displaybreak[0]
	&=[u^{n-b}] \frac{1}{1+u} \binom{n}{n_1,\ldots,n_m}  \prod_{i=1}^m (1+u)^{s_i n_i}\\\displaybreak[0]
	&= [u^{n-b}]\binom{n}{n_1,\ldots,n_m} (1+u)^{-1+\sum_{i=1}^m s_i n_i}\\\displaybreak[0]
	&= \binom{n}{n_1,\ldots,n_m}\binom{-1+\sum_{i=1}^m s_i n_i }{n-b}.
	\end{align*}
	Now, summing over all $b\in [n]$ we can express the total number of $S$-omino towers for given~$(n_1,\ldots,n_m)$ in terms of the Gaussian hypergeometric function $\,_2F_1$.
	\begin{align*}
	& [z^n y_{s_1}^{n_1} \cdots y_{s_m}^{n_m}] \sum_{b=1}^n W_b\\
	&=\sum_{b=1}^n \binom{n}{n_1,\ldots,n_m}\binom{-1+\sum_{i=1}^m s_i n_i }{n-b}\\
	&=\binom{n}{n_1,\ldots,n_m} \binom{-1+\sum_{i=1}^m s_i n_i }{n-1}\sum_{b=0}^{n-1}  \frac{(n - 1)!( -n+\sum_{i=1}^m s_i n_i)!}{(n - 1 - b)!(b-n+\sum_{i=1}^m s_i n_i)!} \\
	&=\binom{n}{n_1,\ldots,n_m} \binom{-1+\sum_{i=1}^m s_i n_i }{n-1} \sum_{b=0}^{n-1} \frac{(1)_b(1-n)_b}{(1-n+\sum_{i=1}^m s_i n_i)_b} \frac{(-1)^b}{b!}\\
	&=\binom{n}{n_1,\ldots,n_m} \binom{-1+\sum_{i=1}^m s_i n_i }{n-1}\cdot \, _2F_1\bigg(1,1-n;1+\sum_{i=1}^m (s_i-1) n_i ;-1\bigg).
	\end{align*}
	This completes the proof of Theorem~\ref{domino:main}.
\end{proof}

\begin{remark}
	We can find closed formulas for the other sets analogously. For example for $s:=\sum n_i s_i$:
	\begin{align*}
	[y_{s_1}^{n_1} \cdots y_{s_m}^{n_m}] [z^n]V 
	&= \binom{s }{n_1,\ldots,n_m,s-n}.
	\end{align*}
	and 
	\begin{align*}
	[y_{s_1}^{n_1} \cdots y_{s_m}^{n_m}][z^n]U
	&=\frac{1}{s+1} \binom{s+1}{n_1,\ldots,n_m,s+1-n}.\\
	\end{align*}
\end{remark}

\section{Bijective proof} \label{domino:sec:bijection}

In this section we give an explicit bijection between $\mathcal{U}$ and generalized Dyck paths and then extend it to a bijection between $\mathcal{W}_b$ and $\mathcal{D}_{W_b}$.
\begin{definition} \label{domino:def:DU}
	We define the weighted set of generalized Dyck paths as
	$$\mathcal{D}_{U} := \bigg \{\langle u_1,u_2,\ldots,u_N\rangle : \forall i\ u_i \ge -1 \wedge \forall j\ \sum_{i< j} u_i \ge 0 \wedge u_N=-1 \wedge \sum_{i \in [N]} u_i= -1\bigg\}$$
 with weight function $\wei(\langle u_1,u_2,\ldots,u_N\rangle) := \prod_{i\in [N]} \begin{cases}z y_{u_i+1},&\text{if $u\ge 0$;}\\1,&\text{if $u=-1$.}\end{cases}$
\end{definition}
See \cite{Rukavicka2011} for an alternative, but equivalent definition of generalized Dyck paths.

\begin{lemma}
	We have a weight-preserving bijection $f_U$ between $\mathcal{U}$ and $\mathcal{D}_U$.
\end{lemma}
\begin{proof}
	We define $f_U(\langle \rangle):=\langle -1 \rangle$ and given a sequence $u = \langle (x_1,\len_1),\ldots,(x_n,\len_n) \rangle \in \mathcal{U}_1$, we define 
	$$f_U(u) := \big(\mathbin\Vert_{i=1}^{n-1} \langle \len_i-1, \underbrace{-1,-1,\ldots,-1}_{x_i+\len_i-1-x_{i+1}\text{ times}} \rangle \big) \mathbin\Vert \langle \len_n-1, \underbrace{-1,-1,\ldots,-1}_{x_n+\len_n\text{ times}} \rangle \in \mathcal{D}_U.$$
	Note, that $f_U$ is clearly weight-preserving, as for every pair $(x,\len) \in u$ we have got exactly one copy of $\len-1$ in $f_U(u)$, both of which account for the same weight: $z y_\len$. Also note that we can invert this function as the original $x$-coordinates of $u$ are prefix-sums of $f_U(u)$.
\end{proof}

As we have proven Theorem~\ref{domino:thm:long} bijectively, we already know how we can relate sequences in $\mathcal{V}$ and $\mathcal{W}$ with sequences in $\mathcal{U}$. We now reuse the ideas from the previous section and replace $\mathcal{U}$ with $\mathcal{D}_U$. Also, we use Raney's Lemma \cite{1609.05988} instead of the Lagrange inversion formula.
\begin{lemma}[Version of Raney's Lemma]
	\label{domino:lemma:cycle}
	For any sequence of integers $\langle a_1,\ldots,a_m\rangle $ with $a_i \ge -1$ and $\sum a_i = -1$, there exists exactly one $r \in [m]$ with the property that all proper partial sums, or in other words, the totals of all proper prefixes, of $\langle a_{r+1},\ldots,a_m,a_1,\ldots,a_{r}\rangle$ are non-negative. Note, that then we must have $a_r = -1$, of course, as $a_{r+1}+\cdots+a_m+a_1+\cdots+a_{r-1} \ge 0$, but $\sum a_i = -1$.
\end{lemma}
We now generalize the idea from Dyck paths to the following sets $\mathcal{V}$ and $\mathcal{W}$:
\begin{definition} \label{domino:def:DV}
	Define \begin{align*}
	\mathcal{D}_{V} &:= \bigg\{\langle u_1,u_2,\ldots,u_N\rangle : \forall i\ u_i \ge -1 \wedge \sum_{i \in [N]} u_i= -1\wedge u_N=-1\bigg \} \text{ and }\\
	\mathcal{D}_{W_b} &:= \bigg\{\langle u_1,u_2,\ldots,u_N\rangle : \forall i \in [b]\ u_i \ge 0 \wedge \sum_{i \in [N]} u_i= -b\wedge u_N=-1\bigg \},
	\end{align*} 
	where the weight of a sequence is $\wei(\langle u_1,u_2,\ldots,u_N\rangle) := \prod_{i\in [N]} \begin{cases}z y_{u_i+1},&\text{if $u\ge 0$;}\\1,&\text{if $u=-1$.}\end{cases}$
	
\end{definition}
We can easily enumerate elements in $V$ and $W_b$ with given weight.
\begin{lemma}
	Fix a weight $w = z^n \prod_{i \in [m]} y_{s_i}^{n_i}$ with $n = \sum_{i \in [m]} n_i$ and let $s = \sum_{i\in[m]} n_i s_i$. Then the number of elements in $\mathcal{D}_V$ with weight $w$ equals
	$$\binom{s}{n_1,n_2,\ldots,n_m,s-n}.$$	
	Also, the number of elements in $\mathcal{D}_{W_b}$ with weight $w$ equals
	$$\binom{n}{n_1,\ldots,n_m}\binom{s-1 }{n-b}.$$	
\end{lemma}
\begin{proof}
	For the first part note, that every element in $\mathcal{D}_{V}$ with weight $w$ has $s-n+1$ copies of $-1$ and total length $N=s+1$. The number of sequences follows from the fact, that only $u_N$ is fixed to be $-1$. 

For the second part note, that now every element in $\mathcal{D}_{W_b}$ with weight $w$ has $s-n+b$ copies of $-1$ and total length $N=s+b$. We have $u_N =-1$ and~$u_i \ne -1$ for $i\in [b]$. The number of elements in $\mathcal{D}_{W_b}$ follows by first considering the order and then the position of the non-negative integers.
\end{proof}

\begin{corollary}
	Define
	$$\mathcal{D}_{Y} := \{(v,k):v = \langle u_1,u_2,\ldots,u_N\rangle \in \mathcal{D}_{U}\wedge k\in [N] \wedge u_k=-1\},$$
	which can be thought of as the set of sequences in $\mathcal{D}_U$ where one copy of $-1$ is marked. Then there is a bijection $f_{V_2}$ between $\mathcal{D}_Y$ and $\mathcal{D}_{V}$.
\end{corollary}
\begin{proof}
	Let $f_{V_2}$ be the following bijection: Take an element $(v,k) \in \mathcal{D}_Y$ and let $f_{V_2}((v,k)):=\langle u_{k+1}, \ldots, u_N, u_1,\ldots,u_k \rangle \in \mathcal{D}_V$. In other words we have moved the marked element of an element in $\mathcal{D}_Y$ to the end by doing a cyclic rotation of the sequence. This can be undone using Raney's Lemma.
\end{proof}

\begin{lemma}
	We have a weight-preserving bijection $f_{V_1}$ between $\mathcal{V}$ and $\mathcal{D}_{Y}$. Then $f_V := f_{V_2} \circ f_{V_1}$ is a weight-preserving bijection between $\mathcal{V}$ and $\mathcal{D}_V$.
\end{lemma}
\begin{proof}
	We construct an explicit bijection $f_{V_1}: \mathcal{V} \rightarrow \mathcal{D}_{Y}$ as follows. First let $f_{V_1}(\langle \rangle) := (\langle -1 \rangle,1) $ and now consider any non-empty $v \in \mathcal{V}$. Then by definition $\exists \len \in \mathbb{N}, \alpha \in \{0,\ldots,\len-1\},  x \in \mathcal{X}_{\len}, v' \in \mathcal{V}$, where $x$ starts with $(-\alpha,\len)$, such that $v = x \mathbin\Vert (v'-\alpha)$.
	By definition of $\mathcal{X}_{\len}$ we can find $c_1,\ldots,c_{\len-1} \in \mathcal{U}$ such that $x = \big(\langle(0,\len)\rangle \mathbin\Vert c_{\len-1}+\len-1\mathbin\Vert c_{\len-2}+\len-2 \mathbin\Vert \cdots \mathbin\Vert c_{1}+1 \big) - \alpha$. Hence we have
	$$v = \big(\langle(0,\len)\rangle \mathbin\Vert c_{\len-1}+\len-1\mathbin\Vert c_{\len-2}+\len-2 \mathbin\Vert \cdots \mathbin\Vert c_{1}+1 \mathbin\Vert v' \big) - \alpha.$$
	
	Let $(u,k) := f_{V_1}(v')$. We define $k':=k+1+|f_U(c_{\len-1})|+\cdots+|f_U(c_{\alpha+1})|$ and 
	$$f_{V_1}(v):=  \big( \langle \len -1\rangle \mathbin\Vert f_U(c_{\len-1}) \mathbin\Vert \cdots \mathbin\Vert f_U(c_{\alpha+1})\mathbin\Vert u \mathbin\Vert f_U(c_{\alpha}) \mathbin\Vert \cdots \mathbin\Vert f_U(c_{1}),k'\big). $$
	Note that $k'$ is chosen such that the marked element in $u$ remains marked.
\end{proof}

\begin{lemma}
	We have a weight-preserving bijection $f_W$ between $\mathcal{W}_b$ and $\mathcal{D}_{W_b}$.
\end{lemma}
\begin{proof}
	From Lemma~\ref{domino:lemma:splitW} we know that we can split an element $w \in \mathcal{W}_b$ into two parts. For $b=1$ we have $w = u\mathbin \Vert v$ where $u\in\mathcal{U}_\star, v\in \mathcal{V}$ and define $f_W(w) := f_U(u)\mathbin \Vert f_V(v)$. For $b\ge 2$ we have $w = u\mathbin \Vert w' - \len$ where $u\in\mathcal{U}_1, w'\in \mathcal{W}_{b-1}$ and $u$ starts with $(0,\len)$. Here we could choose to define $f_W(w) := f_U(u)\mathbin \Vert f_W(w')$, but it would change the definition of $\mathcal{D}_W$ and make its enumeration more complicated. So we instead proceed as follows: Let $f_W(w') = \langle a_1,\ldots,a_n\rangle$. Then define $f_W(w) := \langle a_1,\ldots,a_{b-1}\rangle \mathbin \Vert f_U(u)\mathbin \Vert \langle a_b,\ldots,a_n\rangle$. We can undo the function in both cases, as we know that the sequence $f_U(u)$ sums to $-1$. $f_W$ is weight-preserving, because $f_U$ and $f_V$ are.
\end{proof}

\section{Row-convex k-omino towers} \label{domino:sec:rowconvex}

In this section we consider row-convex $k$-omino towers, as defined in Definition~\ref{def:rowconvex}. By conditioning on the width of the bottom row, we see that $f$ and $g$ are related by the equation
$$g(n)=\sum_{\ell=1}^{n} f_\ell(n-\ell)\text{, or equivalently }G(z)=\sum_{\ell=1}^\infty z^\ell F_{\ell}(z).$$
To improve readability, from now on we write $F_\ell(z)$ as $F_\ell$ and similarly for $G(z)$ and $h_n(z),\alpha(z)$ and~$\beta(z)$ which are yet to be introduced. If the platform has width $\ell$ and the bottom row consists of $i$ blocks, there are $(\ell+2-i)k-1$ positions the blocks in the row above could take such that the row is convex and they do not fall off the sides. This can be seen by an argument similar to the one given by Brown \cite[Proposition 2.5]{1608.01562}. We immediately find the recurrence:
\begin{align}
f_\ell(n)&=\sum_{i=1}^{\ell+1} \bigg ((\ell+2-i)k-1\bigg )f_i(n-i)\text{, for }n\ge 1\text{, where we define} \label{recf}\\
f_\ell(0)&=1\text{ and }f_\ell(n)=0\text{, for }n<0. \nonumber
\end{align}
We now reduce this recurrence in $f$ to a much simpler one and then prove
\begin{lemma}
	$F_\ell$ satisfies the following recurrence relation and boundary conditions:
	\begin{equation}F_{\ell+2}-2 F_{\ell+1} + F_\ell=z^{\ell+2} F_{\ell+2}+ (k-1)z^{\ell+3} F_{\ell+3}, \label{recF}\end{equation}
	\begin{equation}
	\begin{aligned}
	F_1 &=1+(2k-1 )z F_1+(k-1)z^2 F_2, \label{boundary}\\
	F_2 &= 1 + (3k-1)z F_1 + (2k-1)z^2 F_2 + (k-1)z^3 F_3.
	\end{aligned}
	\end{equation}
\end{lemma}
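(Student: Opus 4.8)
The plan is to convert the scalar recurrence \eqref{recf} for the coefficients $f_\ell(n)$ into a single linear relation among the generating functions $F_\ell$, and then extract both the boundary conditions and the recurrence \eqref{recF} from that relation by differencing in the parameter $\ell$.

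First I would multiply \eqref{recf} by $z^n$ and sum over $n \ge 1$. On the left this produces $F_\ell - f_\ell(0) = F_\ell - 1$, since $n=0$ is the only omitted term. On the right, after interchanging the two sums, each inner sum is $\sum_{n\ge 1} f_i(n-i) z^n = z^i F_i$; here one uses that $f_i(m)=0$ for $m<0$, so the terms with $n<i$ drop out, together with $f_i(0)=1$, so the $m=0$ term is correctly retained. This yields the master relation
\begin{equation}
F_\ell - 1 = \sum_{i=1}^{\ell+1}\big((\ell+2-i)k-1\big)\, z^i F_i, \tag{$\star_\ell$}
\end{equation}
valid for every $\ell \ge 1$. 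The two identities in \eqref{boundary} then follow at once by substituting $\ell=1$ and $\ell=2$ into $(\star_\ell)$ and reading off the coefficients $(\ell+2-i)k-1$ for the finitely many values of $i$ involved.

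The essential idea for \eqref{recF} is that the coefficient $(\ell+2-i)k-1$ is affine in $\ell$, so that a second-order difference in $\ell$ reduces it to constants and boundary terms. Concretely, I would first form $(\star_{\ell+1})-(\star_\ell)$: on the overlapping range $1 \le i \le \ell+1$ the coefficient difference is the constant $k$, contributing $k\sum_{i=1}^{\ell+1} z^i F_i$, while the single extra term $i=\ell+2$ present in $(\star_{\ell+1})$ contributes $(k-1)z^{\ell+2}F_{\ell+2}$. This gives the first-difference relation
\begin{equation}
F_{\ell+1}-F_\ell = (k-1)z^{\ell+2}F_{\ell+2} + k\sum_{i=1}^{\ell+1} z^i F_i. \tag{$\dagger_\ell$}
\end{equation}
Differencing once more, $(\dagger_{\ell+1})-(\dagger_\ell)$, the remaining sum telescopes, since $\sum_{i=1}^{\ell+2} z^i F_i - \sum_{i=1}^{\ell+1} z^i F_i = z^{\ell+2}F_{\ell+2}$, leaving $F_{\ell+2}-2F_{\ell+1}+F_\ell = (k-1)z^{\ell+3}F_{\ell+3} + \big(k-(k-1)\big)z^{\ell+2}F_{\ell+2}$, which is precisely \eqref{recF} after simplifying the coefficient $k-(k-1)=1$.

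The computation is elementary once $(\star_\ell)$ is in hand, so the only place that needs care is the index bookkeeping in the two differencing steps: tracking the boundary term at $i=\ell+2$ that appears in $(\star_{\ell+1})$ but not in $(\star_\ell)$, and the telescoping of the partial sum $\sum_{i=1}^{\ell+1} z^i F_i$ in the second difference. I expect this, rather than any analytic difficulty, to be the main obstacle; everything takes place at the level of formal power series, so no convergence issues arise.
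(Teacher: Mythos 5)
Your proof is correct and follows essentially the same route as the paper: both exploit that the coefficient $(\ell+2-i)k-1$ is affine in $\ell$ and apply a second difference in $\ell$ to collapse the sum, and both obtain the boundary conditions by setting $\ell=1,2$ in \eqref{recf}. The only (immaterial) difference is that you pass to generating functions before differencing, whereas the paper differences the coefficient recurrence for $f_\ell(n)$ first and then converts to $F_\ell$.
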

\begin{proof}
	
	First, we calculate
	$$f_{\ell+1}(n)-f_\ell(n)=(k-1) f_{\ell+2}(n-\ell-2)+ k \sum_{i=1}^{\ell+1} f_i(n-i)$$
	and then use this result twice as follows:
	\begin{align*}
	&f_{\ell+2}(n)-2f_{\ell+1}(n)+f_\ell(n)\\
	&=\big (f_{\ell+2}(n)-f_{\ell+1}(n)\big )-\big (f_{\ell+1}(n)-f_\ell(n)\big )\\
	&= (k-1)f_{\ell+3}(n-\ell-3) + f_{\ell+2}(n-\ell-2).
	\end{align*}
	The corresponding recurrence in terms of $F_\ell$ is 
	\begin{align*}
	&F_{\ell+2}-2 F_{\ell+1} + F_\ell\\
	=&\sum_{n=0}^\infty \bigg( f_{\ell+2}(n)-2f_{\ell+1}(n)+f_\ell(n) \bigg) z^n\\
	=&\sum_{n=0}^\infty \bigg( f_{\ell+2}(n-\ell-2) + (k-1)f_{\ell+3}(n-\ell-3) \bigg) z^n\\
	=&z^{\ell+2} F_{\ell+2}+ (k-1)z^{\ell+3} F_{\ell+3}.
	\end{align*}
	
	The boundary conditions are obtained by plugging in $\ell=1$ and $\ell=2$ into \eqref{recf}.
\end{proof}

To solve the recurrence \eqref{recF}, we first guess that there is a solution of the form $$\sum_{j=0}^\infty \frac{z^{\ell j}h_j(z)}{\left(z;z\right)_j^2}$$ and then determine an $h_j(z)$ such that the recurrence relation holds. Here $(a;q)_n=\prod_{i=0}^{n-1}(1-a q^i)$ denotes the $q$-Pochhammer symbol. That this method works is not surprising:
In 1988, Privman and \v{S}vraki\'{c} successfully found an exact generating function for \emph{fully directed compact lattice animals} using this approach \cite{PhysRevLett.60.1107}. The two problems are related, as there is a bijection between the set of objects they were considering and \emph{restricted} row-convex domino towers. The number of dominoes in the bottom row maps to the number of compact sources of the directed animal. For an illustration of this bijection see Figure~\ref{fig:directedanimals}.
\begin{figure}[ht]
	\centering
	\begin{tikzpicture}[scale=0.5]
	\draw[fill=cyan!50, draw=black] (0,0) rectangle ++(2,1);
	\draw[fill=violet!50, draw=black] (-1,1) rectangle ++(2,1);
	\draw[fill=cyan!50, draw=black] (1,1) rectangle ++(2,1);
	\draw[fill=cyan!50, draw=black] (2,2) rectangle ++(2,1);
	\draw[fill=violet!50, draw=black] (0,2) rectangle ++(2,1);
	\draw[fill=pink!50, draw=black] (-1,3) rectangle ++(2,1);
	\draw[fill=violet!50, draw=black] (1,3) rectangle ++(2,1);
	\draw[fill=violet!50, draw=black] (2,4) rectangle ++(2,1);
	
	\draw[thick,<->] (4.5,1.5) -- (6,1.5);
	
	\draw[fill=cyan!50, draw=black] (7,0) rectangle ++(1,1);
	\draw[fill=cyan!50, draw=black] (8,0) rectangle ++(1,1);
	\draw[fill=cyan!50, draw=black] (9,0) rectangle ++(1,1);
	\draw[fill=violet!50, draw=black] (7,1) rectangle ++(1,1);
	\draw[fill=violet!50, draw=black] (8,1) rectangle ++(1,1);
	\draw[fill=violet!50, draw=black] (9,1) rectangle ++(1,1);
	\draw[fill=violet!50, draw=black] (10,1) rectangle ++(1,1);
	\draw[fill=pink!50, draw=black] (8,2) rectangle ++(1,1);
	\end{tikzpicture}
	\caption{Illustration of the bijection between \emph{restricted} row-convex domino towers and fully directed compact lattice animals}
	\label{fig:directedanimals}
\end{figure}

After adapting their method we end up with two solutions $A_\ell $ and $B_\ell$, which we now check:
\begin{lemma}
	Two solutions of \eqref{recF} are:
	\begin{align*}
	A_\ell&:=\sum_{j=0}^\infty \frac{z^{\ell j} h_j }{\left(z;z\right)_j^2}\text{ and}\\
	B_\ell&:=\sum _{j=0}^{\infty} \frac{z^{\ell j} h_j }{\left(z;z\right)_j^2} \left (\ell+\sum _{m=1}^j \left (1+\frac{2}{1-z^m}-\frac{1}{1+(k-1) z^m} \right) \right )\text{, where}\\
	h_j&:=z^{j(j+1)}\left((1-k)z;z\right)_j.
	\end{align*}
\end{lemma}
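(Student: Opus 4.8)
The plan is to verify directly that the two proposed series satisfy the recurrence \eqref{recF}; since both are already given in closed form, no solving is required. The common engine of both verifications is a single ratio identity for the coefficients. Writing $a_j := h_j/\left(z;z\right)_j^2$, so that $A_\ell = \sum_{j\ge 0} a_j z^{\ell j}$, the $q$-Pochhammer definitions give $\left(z;z\right)_j = \left(z;z\right)_{j-1}(1-z^j)$ and $h_j/h_{j-1} = z^{2j}\bigl(1+(k-1)z^j\bigr)$, the latter because $z^{j(j+1)}/z^{(j-1)j}=z^{2j}$ and $\left((1-k)z;z\right)_j/\left((1-k)z;z\right)_{j-1} = 1-(1-k)z^{j} = 1+(k-1)z^{j}$. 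Hence
$$\frac{a_j}{a_{j-1}} = \frac{z^{2j}\bigl(1+(k-1)z^j\bigr)}{(1-z^j)^2}, \qquad j\ge 1.$$
I would note at the outset that every series converges in the formal power series topology: $a_j$ has lowest degree $j(j+1)$, so $a_j z^{\ell j}$ has lowest degree tending to infinity, and the same holds after inserting the bounded extra factor appearing in $B_\ell$. This also legitimises the termwise-in-$j$ matching used below, since termwise equality of two such convergent sums implies equality of the sums.

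For $A_\ell$, substitute into \eqref{recF}. The left-hand side becomes $\sum_{j\ge 0} a_j z^{\ell j}(1-z^j)^2$, while on the right-hand side the substitutions $z^{\ell+2}z^{(\ell+2)j}=z^{\ell(j+1)}z^{2(j+1)}$ and $z^{\ell+3}z^{(\ell+3)j}=z^{\ell(j+1)}z^{3(j+1)}$, followed by the reindexing $j\mapsto j+1$, turn it into $\sum_{j\ge 1} a_{j-1}z^{\ell j}\bigl(z^{2j}+(k-1)z^{3j}\bigr)$. Matching the two series termwise in $j$ reduces the claim to $a_j(1-z^j)^2 = a_{j-1}z^{2j}\bigl(1+(k-1)z^j\bigr)$ for $j\ge 1$ (the $j=0$ term vanishes on both sides), which is exactly the ratio identity above.

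For $B_\ell$, write $B_\ell = \sum_{j\ge 0} a_j z^{\ell j}(\ell + S_j)$ with the $\ell$-independent partial sum $S_j := \sum_{m=1}^j\bigl(1+\frac{2}{1-z^m}-\frac{1}{1+(k-1)z^m}\bigr)$, so that $S_0 = 0$ and $S_j - S_{j-1} = 1+\frac{2}{1-z^j}-\frac{1}{1+(k-1)z^j}$. Performing the same substitution and reindexing as before, and again comparing termwise in $j$, the claim reduces — after cancelling the common factor $a_j(1-z^j)$ via the ratio identity — to the single algebraic identity
$$\Bigl[(\ell+S_j)(1-z^j)-2z^j\Bigr]\bigl(1+(k-1)z^j\bigr) = (1-z^j)\Bigl[(\ell+2+S_{j-1})+(k-1)z^j(\ell+3+S_{j-1})\Bigr].$$
Here the only structural input is the difference relation for $S_j$: substituting $S_j = S_{j-1}+1+\frac{2}{1-z^j}-\frac{1}{1+(k-1)z^j}$ causes the terms $\frac{2}{1-z^j}(1-z^j)=2$ and $\frac{1}{1+(k-1)z^j}(1-z^j)$ to collapse, and after multiplying out and dividing by $(1-z^j)$ one arrives at the polynomial identity $(1+(k-1)z^j)(\ell+S_{j-1}+3)-1 = (\ell+2+S_{j-1})+(k-1)z^j(\ell+3+S_{j-1})$, which holds identically.

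The main obstacle is the bookkeeping in the $B_\ell$ computation: one must carefully track how the non-telescoping rational summand inside $S_j$ interacts with the factors $(1-z^j)$ and $1+(k-1)z^j$ produced by the recurrence and by the ratio identity, and it is precisely the defining difference $S_j-S_{j-1}$ that forces every $z$-dependent contribution to cancel, leaving an identity linear in $\ell$ and polynomial in $z^j$. By contrast, the $A_\ell$ case, the convergence argument, and the termwise-matching justification are routine.
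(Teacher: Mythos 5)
Your proposal is correct and follows essentially the same route as the paper: both verifications rest on the ratio identity $h_j/h_{j-1}=z^{2j}\bigl(1+(k-1)z^j\bigr)$ together with $(z;z)_j=(z;z)_{j-1}(1-z^j)$ and a reindexing $j\mapsto j+1$, the only difference being that you match the two sides termwise against a single coefficient identity while the paper transforms the left-hand side into the right-hand side in one chain of equalities. Your explicit check of formal convergence and of the $B_\ell$ identity via the difference relation $S_j-S_{j-1}$ is a harmless elaboration of what the paper does in its Appendix.
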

\begin{proof}
	First we calculate the ratio
	$$\frac{h_j}{h_{j-1}} = \frac{z^{j(j+1)} }{z^{(j-1)j}} \frac{\prod_{i=1}^j (1-(1-k)z^i)}{ \prod_{i=1}^{j-1} (1-(1-k)z^i)} = z^{2 j} (1+(k-1) z^j).$$
	Then we show that the recurrence holds
	\begin{align*}
	&A_{\ell+2}-2 A_{\ell+1} + A_\ell\\
	&=\sum_{j=0}^\infty \frac{ h_j }{\left(z;z\right)_j^2} \left(z^{(\ell+2) j}-2z^{(\ell+1) j}+z^{\ell j}\right )\\
	&=\sum_{j=1}^\infty \frac{ h_{j-1}\left (z^{2 j}+(k-1) z^{3 j}\right ) }{\left(z;z\right)_j^2} z^{\ell j} \left(1-z^j\right )^2\\
	&=\sum_{j=1}^\infty \frac{ h_{j-1}\left (z^{(\ell+2) j}+(k-1) z^{(\ell+3) j}\right ) }{\left(z;z\right)_{j-1}^2}\\
	&=\sum_{j=0}^\infty \frac{ h_{j}\left (z^{(\ell+2) (j+1)}+(k-1) z^{(\ell+3) (j+1)}\right ) }{\left(z;z\right)_{j}^2}\\
	&=z^{\ell+2} A_{\ell+2}+ (k-1)z^{\ell+3} A_{\ell+3}.
	\end{align*}
	
	Similarly, we can prove that $B_\ell$ is a solution. The interested reader can find the detailed calculation for this in the Appendix.
\end{proof}

We have yet to find a solution that satisfies the boundary conditions. A suitable linear combination of $A_\ell$ and $B_\ell$, however, does the trick. In general, setting $F_\ell = \alpha A_\ell + \beta B_\ell$ and solving the simultaneous equations $c_1 F_1 + c_2 F_2 + c_3 F_3 = 1$ and $d_1 F_1 + d_2 F_2 + d_3 F_3 = 1$ for $\alpha$ and $\beta$ yields after some algebra:
\begin{align*}
\alpha &= \big(-(c_1-d_1)B_1-(c_2-d_2)B_2-(c_3-d_3)B_3\big)/d,\\
\beta &= \big((c_1-d_1)A_1+(c_2-d_2)A_2+(c_3-d_3)A_3\big)/d,\text{ where}\\
d &= (c_1 A_1 + c_2 A_2 + c_3 A_3)(d_1 B_1 + d_2 B_2 + d_3 B_3)\\
&\quad-(c_1 B_1 + c_2 B_2 + c_3 B_3)(d_1 A_1 + d_2 A_2 + d_3 A_3).
\end{align*}
Now setting
\begin{align*}
c_1 &= 1-(2k-1)z,&c_2&=-(k-1)z^2,&c_3&=0,\\
d_1 &=-(3k-1)z,&d_2&=1-(2k-1)z^2,&d_3&=-(k-1)z^3
\end{align*} as in \eqref{boundary} and plugging $\alpha$ and $\beta$ into $F_\ell = \alpha A_\ell + \beta B_\ell$ yields the result of Theorem~\ref{thm:rowconvex}.

\section{Comments and open questions}

\begin{enumerate}
	\item One might reconsider the \emph{restricted} problem. Using a substitution similar to the one mentioned in Corollary~\ref{domino:4^n}, we can deduce the generating function for the restricted case, by replacing $z y_i$ with $\frac{z y_i}{1+z y_i}$.
	However, is it possible to find a direct way of enumerating the generating function of restricted towers and a closed formula for its coefficients?
	\item In this paper we have counted $S$-omino towers and row-convex towers. Is it possible to combine the two ideas and count row-convex $S$-omino towers?
\end{enumerate}

\section{Acknowledgement}
I would like to thank my supervisor Anusch Taraz and Julian Gro\ss{}mann for many helpful comments and suggestions regarding both content and presentation.

\section*{Appendix}
For the sake of completeness, we show that $B_\ell$ is a solution to \eqref{recF}.
\begin{align*}
&B_{\ell+2}-2 B_{\ell+1} + B_\ell\\
=&\sum_{j=0}^\infty \frac{ h_j }{\left(z;z\right)_j^2} \Bigg ((\ell+2)z^{(\ell+2) j}-2(\ell+1)z^{(\ell+1) j}+\ell z^{\ell j} \\
&\quad\quad\quad\quad\quad\quad+\left(z^{(\ell+2) j}-2z^{(\ell+1) j}+z^{\ell j}\right)\sum _{m=1}^j \left (1+\frac{2}{1-z^m}-\frac{1}{1+(k-1) z^m} \right)\Bigg )\\
=&\sum_{j=1}^\infty \frac{ h_j }{\left(z;z\right)_j^2} z^{\ell j} \left(1-z^j\right )^2 \Bigg (2+\ell-\frac{2}{1-z^j} + \sum _{m=1}^j \left (1+\frac{2}{1-z^m}-\frac{1}{1+(k-1) z^m} \right)\Bigg )\\
=&\sum_{j=1}^\infty \frac{ h_{j-1}\left (z^{(\ell+2) j}+(k-1) z^{(\ell+3) j}\right ) }{\left(z;z\right)_{j-1}^2} \Bigg (2+\ell-\frac{2}{1-z^j}\\
& \quad\quad\quad\quad\quad\quad\quad\quad\quad\quad\quad\quad\quad\quad\quad\quad+ \sum _{m=1}^j \left (1+\frac{2}{1-z^m}-\frac{1}{1+(k-1) z^m} \right)\Bigg )\\
=&\sum_{j=1}^\infty \frac{ h_{j-1}\left (z^{(\ell+2)j}+(k-1) z^{(\ell+3)j}\right ) }{\left(z;z\right)_{j-1}^2} \Bigg (3+\ell-\frac{1}{1+(k-1)z^{j}} \\
&\quad\quad\quad\quad\quad\quad\quad\quad\quad\quad\quad\quad\quad\quad\quad\quad+\! \sum _{m=1}^{j-1} \! \! \left (1+\frac{2}{1-z^m}-\frac{1}{1+(k-1) z^m} \right)\! \Bigg )\\
=&\sum _{j=1}^{\infty} \frac{ h_{j-1} }{\left(z;z\right)_{j-1}^2} \Bigg ((\ell+2) z^{(\ell+2) j}+(k-1)(\ell+3) z^{(\ell+3) j}\\
&+ \left (z^{(\ell+2)j}+(k-1)z^{(\ell+3)j}\right)\sum _{m=1}^{j-1} \left (1+\frac{2}{1-z^m}-\frac{1}{1+(k-1) z^m} \right) \Bigg )\\
=&\sum _{j=0}^{\infty} \frac{ h_j }{\left(z;z\right)_j^2} \Bigg ((\ell+2) z^{(\ell+2) (j+1)}+(k-1)(\ell+3) z^{(\ell+3) (j+1)}\\
&+ \left (z^{(\ell+2)(j+1)}+(k-1)z^{(\ell+3)(j+1)}\right )\sum _{m=1}^j \left (1+\frac{2}{1-z^m}-\frac{1}{1+(k-1) z^m} \right) \Bigg )\\
=&z^{\ell+2} B_{\ell+2}+ (k-1)z^{\ell+3} B_{\ell+3}.
\end{align*}

\end{document}